\newtheorem*{theorem*}{Theorem}
\newtheorem*{lemma*}{Lemma}
\newtheorem*{example*}{Example}
\newtheorem{theorem}{Theorem}
\def\b0{{\bf 0}}
\def\b1{{\bf 1}}
\def\cD{{\cal D}}
\def\cX{{\cal X}}
\def\cO{{\cal O}}
\def\cQ{{\cal Q}}
\def\n{\noindent}
\begin{document}
{\centering
\begin{Large}
{\bf FACTORIZATION OF  PLATONIC POLYTOPES INTO CANONICAL SPHERES}
\end{Large}
\footnote{appeared in {\it Geombinatorics},  {\bf XXXI}(1)(2021) 5--9.}
\vspace{0.3in}

Richard H.~Hammack\\
Department of Mathematics and Applied Mathematics\\
Virginia Commonwealth University\\
Richmond, VA  23284-2014 USA\\
\texttt{rhammack@vcu.edu}

\vspace{0.3in}
Paul C.~Kainen\\
Department of Mathematics and Statistics\\
Georgetown University\\
Washington, D.C. 20057 USA\\
 \texttt{kainen@georgetown.edu}
\par}



\vspace{0.3in}

\begin{abstract}
Factorization into spheres is achieved for skeleta of the simplex, cube, and cross-polytope, both explicitly and using Keevash's proof of existence of designs.
\end{abstract}

\noindent
{\bf Key Phrases}: {Decomposition of skeleta, 
Steiner triples, Hanani quadruples}

\bigskip
Decomposing a polytope's $2$-skeleton (i.e., partitioning its $2$-cells) into closed manifolds was studied in \cite{bsw76}, \cite{spreer}, \cite{hk-bhms}, \cite{hk-monthly}, \cite{pk-tubes-and-cubes}, while in \cite{hk-ADAM} we found {\it factorizations} of hypercube $2$-skeleta into boundaries of (pairwise isomorphic) $3$-cubes. Here we obtain such sphere factorizations for $k$-skeleta of all (non-exceptional) Platonic polytopes both explicitly for low values of $k$ and, as a consequence of Keevash's result \cite{keevash}, also existentially with $k \geq 1$ arbitrary.  Note we only use the case of multiplicity $\lambda = 1$ of \cite{keevash}.

Let $\Delta_n$ denote the $n$-dimensional simplex, whose 1-skeleton is the graph $K_{n+1}$.
Let $\cO_n$ denote the $n$-dimensional cross-polytope, whose 1-skeleton is the graph $K_{2n} - F$, where $F$ is a 1-factor.  Let $\cQ_n$ denote the $n$-dimensional hypercube, whose 1-skeleton is the graph $Q_n$, the $n$-fold graph Cartesian product of $K_2$.  These are the three families of Platonic polytopes.

For any $n$-dimensional polytope $A$ and  nonnegative integer $k$, let $A^k$ denote the $k$-skeleton of $A$; see, e.g., \cite{coxeter}. If $L$ and $K$ are polytopal $\ell$-complexes, let $L | K$ mean that {\bf L factors K}; that is,  $K$ and $L$ are of equal dimension $\ell$ and $K$ is the union of subcomplexes $L_1, \ldots, L_r$, each isomorphic to $L$, such that every $\ell$-face of $K$ is contained in exactly one of the $L_i$.
An $\ell$-complex is {\bf even} if each  $(\ell{-}1)$-face is in a positive even number of $\ell$-faces. 


For $X \in \{\Delta, \cO, \cQ\}$
we study the factorization of $K = X_n^\ell$ by $L = X_{\ell + 1}^\ell$ for
$\ell \leq n{-}1$.
We call 
$L$ the {\bf canonical} $\ell$-sphere as it is the boundary of a type $X$ polytope of dimension $\ell {+} 1$.  For $\ell \geq 1$, such factorizations always exist for $\cO$ as the proof of the first theorem below shows explicitly.

Observe, however, that these canonical spheres might not be composed of the {\it minimum} number of $\ell$-faces.
Indeed, by Theorem \ref{th:O} below,  the 1-skeleton of $\cO_3$ is factored into three
4-cycles, rather than four 3-cycles.

Clearly, $X_n^\ell$ even is a necessary condition for a factorization into spheres.  The $\ell$-skeleton of an $n$-simplex (or an $n$-cube) is an even complex if and only if $n{-}\ell{+}1$ (the number of $\ell$-faces containing an $(\ell{-}1)$-face) is positive and even; for the cross-polytope $\cO_n$, 
the $\ell$-skeleton is even for all $1 \leq \ell < n$.
Evenness is automatic for $\cO_n^\ell$ as each $(\ell{-}1)$-face is a simplex and is part of two $\ell$-faces (also simplexes) one for each vertex in a copy of $\overline{K_2}$.

The following extends \cite{hk-bhms} for $\ell=2$; cf. Spreer \cite{spreer}, which partitions $\cO_n^2$ into face-disjoint surfaces that contain the 1-skeleton $\cO_n^1$; such surfaces are called {\bf 1-Hamiltonian}. Observe that our result does {\it not} depend on \cite{keevash}.

\begin{theorem}
If  $1 \leq \ell <n$, then $\cO_{\ell + 1}^\ell\,|\,\cO_n^\ell$. 
\label{th:O}
\end{theorem}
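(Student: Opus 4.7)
The plan is to exhibit an explicit factorization; no design theory is needed for the cross-polytope case. Label the $2n$ vertices of $\cO_n$ as $\{(i,\varepsilon) : 1 \leq i \leq n, \; \varepsilon \in \{+,-\}\}$, so that the $n$ antipodal pairs are $P_i = \{(i,+),(i,-)\}$. Since the cross-polytope is simplicial, an $\ell$-face is exactly a set of $\ell{+}1$ mutually non-antipodal vertices; equivalently, it is specified by a choice of $(\ell{+}1)$-subset $S \subseteq \{1,\ldots,n\}$ (the indices of the antipodal pairs it uses) together with a sign pattern $\sigma : S \to \{+,-\}$ (which vertex of $P_i$ is selected for each $i \in S$).

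For each $S \in \binom{[n]}{\ell+1}$, define $L_S$ to be the closed subcomplex of $\cO_n^\ell$ consisting of the $2^{\ell+1}$ $\ell$-faces whose pair-index set is exactly $S$, together with all of their faces. These $\ell$-faces are precisely the $\ell$-faces of the sub-cross-polytope on the pairs $\{P_i : i \in S\}$, so the relabeling bijection $S \to \{1,\ldots,\ell{+}1\}$ induces an isomorphism $L_S \cong \cO_{\ell+1}^\ell$.

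To conclude, note that every $\ell$-face $F$ of $\cO_n^\ell$ has a uniquely determined pair-index set $S(F)$ of size exactly $\ell{+}1$ (one index for each of its vertices, and by the non-antipodal condition these indices are distinct), and so $F$ lies in $L_{S(F)}$ and in no other $L_S$. Thus the family $\{L_S : S \in \binom{[n]}{\ell+1}\}$ partitions the $\ell$-faces of $\cO_n^\ell$ into $\binom{n}{\ell+1}$ isomorphic copies of $\cO_{\ell+1}^\ell$, giving the claimed factorization. There is essentially no obstacle: the simplicial structure of $\cO_n$ guarantees that the ``support on antipodal pairs'' map sorts $\ell$-faces cleanly into blocks of size $2^{\ell+1}$, each of which is the $\ell$-skeleton of a canonical sub-cross-polytope. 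This contrasts with the simplex and cube, where the analogous naive partition does not exist and one must invoke Steiner/Hanani systems or Keevash's theorem.
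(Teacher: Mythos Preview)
Your proof is correct and takes essentially the same approach as the paper's: the paper phrases the construction as the iterated topological join of each $(\ell{+}1)$-element subset of the $n$ copies of $\overline{K_2}$, which is precisely your $L_S$ for $S \in \binom{[n]}{\ell+1}$, described instead in explicit vertex-and-sign coordinates. The only difference is vocabulary (join versus combinatorial labeling), not substance.
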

\begin{proof}
For each $k\geq1$, $\partial \cO_k$ is the iterated topological join of $k$  copies of the 2-point complex $\overline{K_2}$, which is a $(k{-}1)$-sphere. The $\ell$-skeleton of $\cO_n$ is the $\ell$-cell-disjoint union of the ${{n}\choose{\ell + 1}}$ copies of $\cO_{\ell + 1}^\ell$ formed by the iterated join of each $(\ell {+} 1)$-element subset of the set $n\,\overline{K_2}$.
\end{proof}
 


For the simplex, we use the theory of Steiner-type configurations $(v, k, \ell)$,  which
consist of a $v$-element set $X$
endowed with a family of $k$-element subsets of $X$ (called {\bf blocks}) such that each
$\ell$-subset of $X$ is in exactly one block.  The case $k = \ell{+}1$ will be what we apply to factor $\Delta_{v-1}^{\ell-1}$ by $\Delta_{\ell}^{\ell-1}$. 

The translation from design to combinatorial factorization works for special cases, such as the Steiner triple systems as we noted in \cite{hk-ADAM}. But by Keevash
 \cite{keevash} (see also Gowers \cite{gowers} and Kalai \cite{kalai}), this sort of  factorization must occur {\it almost always} for the $\ell$-skeleta of simplexes and, hence, of cubes, as we shall explain.
Write $a\,|\,b$ to mean $b$ is an integer multiple of $a$.
 
Define the {\bf divisibility set} (e.g.,  \cite{hanani}) 
to be 
the set of all feasible values 
\begin{equation}
\cD(k,\ell) := \Big\{v : {{k-h}\choose{\ell-h}}\,\Big|\,{{v-h}\choose{\ell-h}},\,0 \leq h \leq \ell{-}1 < k \leq v\Big \},
\end{equation}

By \cite{keevash}, 
{\it for $1 \leq \ell \leq k$,   
$\exists$ {\rm finite} set $\cX(k,\ell) \subset \cD(k,\ell)$ such that}
\begin{equation}
\exists (v,k,\ell)\mbox{{\rm -configuration}} \iff v \in \cD(k,\ell) \setminus \cX(k,\ell).
\end{equation} 
\n

\begin{figure}
\centering
\begin{tikzpicture}[style=thick,scale=1.4]
\foreach \x in {-2.25,0,2.25,4.5} \foreach \t in {0,1,2,3,4,5,6,7}
{
\draw [lightgray] (\x,0) +(\t*45+22.5:1)--+(\t*45+45+22.5:1)--+(\t*45+135+22.5:1);
\draw [lightgray] (\x,0) +(\t*45+22.5:1)--+(\t*45+135+22.5:1);
}
\foreach \x in {-2.25,0,2.25,4.5} \foreach \t in {0,1,2,3} \draw [lightgray] (\x,0) +(\t*45+22.5:1)--+(\t*45+180+22.5:1);

\foreach \x in {-2.25, 0,2.25,4.5} \foreach \t in {0,1,2,3,4,5,6,7}
\draw [lightgray,fill=white] (\x,0) +(\t*45+22.5:1) circle (0.08);

\foreach \x in {-2.25, 0,2.25,4.5} \foreach \t in {0,1,2,3,4,5,6,7}
\draw (\x,0) +(\t*45+22.5:1)  node {\tiny $\t$};

\draw [very thick] (-2.25,0) +(0+22.5:1)--+(45+22.5:1)--+(90+22.5:1)--+(135+22.5:1)--+(0+22.5:1)--+(90+22.5:1) +(45+22.5:1)--+(135+22.5:1);
\foreach \t in {0,45,90,135} 
\draw [fill=lightgray] (-2.25,0) +(\t+22.5:1) circle (0.08);
\foreach \t in {0,1,2,3} 
\draw (-2.25,0) +(\t*45+22.5:1) node {\tiny $\t$};

\draw [very thick] (-22.5:1)--(45+22.5:1)--(90+22.5:1)--(180+22.5:1)--(-22.5:1)--(90+22.5:1)
(90-22.5:1)--(180+22.5:1);
\foreach \t in {0,90,135,225} 
\draw [fill=lightgray] (\t-22.5:1) circle (0.08);
\foreach \t in {1,2,4,7} 
\draw  (\t*45+22.5:1) node {\tiny $\t$};

\draw [very thick] (2.25,0) +(90-22.5:1)--+(90+22.5:1)--+(-90-22.5:1)--+(-90+22.5:1)--+(90-22.5:1)
 +(90+22.5:1)--+(-90+22.5:1)  +(90-22.5:1)--+(-90-22.5:1);
\foreach \t in {90,135,270,315} 
\draw [fill=lightgray] (2.25,0) +(\t-22.5:1) circle (0.08);
\foreach \t in {1,2,5,6}  \draw (2.25,0) +(\t*45+22.5:1) node {\tiny $\t$};
 
\draw [very thick] (4.5,0) +(0+22.5:1)--+(90+22.5:1)--+(180+22.5:1)--+(270+22.5:1)--cycle;
\draw [very thick] (4.5,0) +(22.5:1)--+(180+22.5:1) +(90+22.5:1)--+(270+22.5:1);
\foreach \t in {0,90,180,270} 
\draw [fill=lightgray] (4.5,0) +(\t+22.5:1) circle (0.08);
\foreach \t in {0,2,4,6}  \draw (4.5,0) +(\t*45+22.5:1) node {\tiny $\t$};

\draw (-2.25,2) node {
\begin{minipage}{1in}
\footnotesize
Four rotations of this configuration
by angles of $k\frac{\pi}{2}$ for
$k=0,1,2,3$.
\end{minipage}};

\draw (-2.25,-1.75) node {
\begin{minipage}{1in}
\footnotesize
\centering
$0234$\\
2345\\
4567\\
6701
\end{minipage}};

\draw (0,2) node {
\begin{minipage}{1in}
\footnotesize
Four rotations of this configuration
by angles of $k\frac{\pi}{2}$ for
$k=0,1,2,3$.
\end{minipage}};

\draw (0,-1.75) node {
\begin{minipage}{1in}
\footnotesize
\centering
1247\\
3461\\
5603\\
7025
\end{minipage}};

\draw (2.25,2) node {
\begin{minipage}{1in}
\footnotesize
Four rotations of this configuration
by angles of $k\frac{\pi}{4}$ for
$k=0,1,2,3$.
\end{minipage}};

\draw (2.25,-1.75) node {
\begin{minipage}{1in}
\footnotesize
\centering
1256\\
2367\\
3470\\
4501
\end{minipage}};

\draw (4.5,2) node {
\begin{minipage}{1in}
\footnotesize
Two rotations of this configuration
by angles of $0$ and $\frac{\pi}{4}$ 
\end{minipage}};

\draw (4.5,-1.5) node {
\begin{minipage}{1in}
\footnotesize
\centering
0246\\
1357

\end{minipage}};
\end{tikzpicture}
\label{Fig:Delta}
\caption{Factoring $\Delta_7^2$ into 14 $2$-face-disjoint tetrahedral boundaries $\Delta_3^2$}
\end{figure}
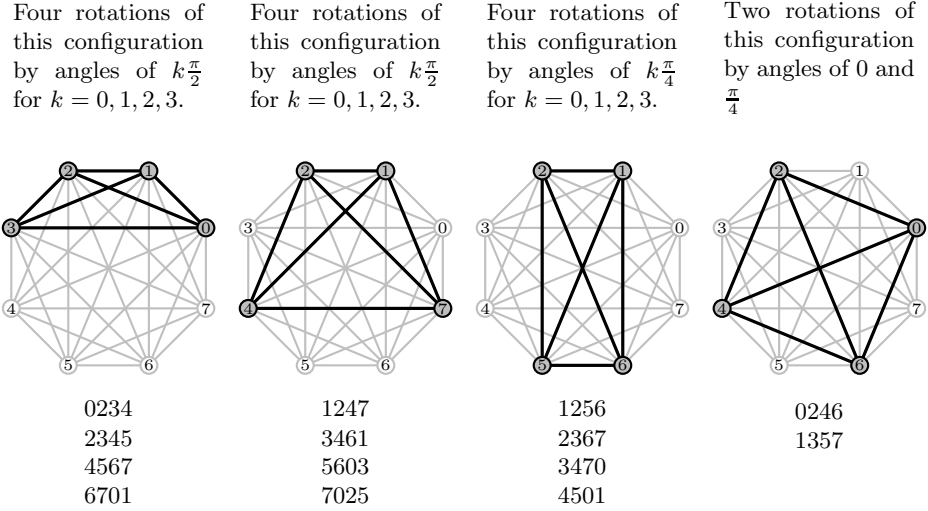

We interpret the combinatorics {\it geometrically} as applying to faces of the simplex.
Since the $(j{+}1)$-sets from the $n{+}1$ vertices of $\Delta_n$
correspond to the $j$-faces of $\Delta_n$,
we obtain a factorization into canonical spheres.
\begin{theorem}
For $1\leq \ell$ and, {\rm assuming Keevash's result \cite{keevash}},
\begin{equation}
\Delta_{\ell{+}1}^\ell\,\Big |\,\Delta_{n}^\ell
\,\iff \, n+1\, \in \, \cD({\ell {+} 2},\ell{+} 1) \setminus \cX({\ell {+} 2},\ell{+} 1).
\end{equation}
\label{th:gen-sx}
\end{theorem}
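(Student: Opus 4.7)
The plan is to reduce the geometric factorization claim to the existence of a combinatorial $(v,k,\ell')$-configuration with carefully chosen parameters, and then directly invoke Keevash's theorem as quoted in (2). The translation hinges on the standard identification of faces of a simplex with subsets of its vertex set, so the bulk of the work is bookkeeping rather than new mathematics.

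First I would fix the dictionary between simplex faces and vertex subsets. The $\ell$-faces of $\Delta_n$ correspond bijectively to $(\ell{+}1)$-element subsets of the vertex set $V$ of size $n{+}1$. Each canonical $\ell$-sphere $\Delta_{\ell+1}^\ell$ sitting inside $\Delta_n^\ell$ is the boundary of an $(\ell{+}1)$-simplex, hence determined by an $(\ell{+}2)$-element subset $B \subset V$; moreover, its $\ell$-faces are precisely the $(\ell{+}1)$-subsets of $B$. Thus a family $L_1,\ldots,L_r$ of copies of $\Delta_{\ell+1}^\ell$ corresponds to a family of $(\ell{+}2)$-subsets (the blocks) $B_1,\ldots,B_r$ of $V$.

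Next I would read off the factorization condition in this language. The requirement that every $\ell$-face of $\Delta_n^\ell$ lies in exactly one of the $L_i$ is equivalent to the requirement that every $(\ell{+}1)$-subset of $V$ is contained in exactly one block $B_i$. With $v := n{+}1$, $k := \ell{+}2$, and the design parameter $\ell{+}1$, this is precisely the definition of a $(v, \ell{+}2, \ell{+}1)$-configuration on $V$. So $\Delta_{\ell+1}^\ell \, | \, \Delta_n^\ell$ if and only if a $(n{+}1, \ell{+}2, \ell{+}1)$-configuration exists.

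Finally I would invoke Keevash's theorem (2) with the substitution $k \mapsto \ell{+}2$ and the inner design parameter $\mapsto \ell{+}1$: such a configuration exists exactly when $n{+}1 \in \cD(\ell{+}2,\ell{+}1)\setminus\cX(\ell{+}2,\ell{+}1)$. (The hypothesis $1\leq\ell$ ensures $1 \leq \ell{+}1 \leq \ell{+}2$, so Keevash's statement applies.) Combining the two equivalences yields the theorem. There is essentially no technical obstacle; the only place where care is required is keeping the index shifts straight (faces versus subsets, and design-$\ell$ versus skeleton-$\ell$), since off-by-one errors here would misalign the parameters of $\cD$ and $\cX$.
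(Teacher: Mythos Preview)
Your proposal is correct and follows exactly the approach the paper itself takes: identify $j$-faces of $\Delta_n$ with $(j{+}1)$-subsets of its $n{+}1$ vertices, so that a factorization $\Delta_{\ell+1}^\ell \,|\, \Delta_n^\ell$ is the same data as an $(n{+}1,\ell{+}2,\ell{+}1)$-configuration, and then invoke Keevash's result~(2). The paper states this dictionary in a single sentence just before the theorem; your write-up simply makes the index bookkeeping explicit.
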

\vspace{-.6 cm}
Any $(v,3,2)$-configuration is a Steiner triple system (as observed in \cite{keevash}), and corresponds to a factorization of the $1$-skeleton of
$\Delta_{v-1}$ (i.e., $K_{v}$) into edge-disjoint 3-cycles,
the canonical  (and minimum) spheres in 1-dimension.  

For the case $(v,4,3)$, corresponding to the $(v{-}1)$-simplex, Hanani constructed a family of $4$-sets (quadruples) that cover each $3$-set exactly once, and he proved in \cite{hanani} that they exist for $v \geq 4$ if and only if $v \equiv 2$ or $4$ (mod-$6$), so again {\it independent of}$\,$ \cite{keevash}, we can characterize when the $2$-skeleton of an $n$-simplex is factored by the boundary of a tetrahedron:
\begin{equation}
\Delta_3^2 \,|\,\Delta_n^2\;\iff\; n\equiv 1 \,\mbox{or}\;3\;(\mbox{mod}\;6), \,n \geq 3.
\label{eq:hanani}
\end{equation}
See Fig. 1 for an illustration of the first non-trivial case of (\ref{eq:hanani}).

In order to factor cube-skeleta, we use an ``exponentiation'' method that goes back to Danzer \cite{danzer}, \cite{ks-DIMAC}.  If $\sigma = (i_1,\ldots,i_{k+1})$ is any $k$-simplex in the $n$-simplex with vertex set $[n{+}1]:=\{1,\ldots, n{+}1\}$, let
\begin{equation}
2^\sigma := A_1 \times \cdots \times A_{n+1},
\label{eq:exp-corr}
\end{equation}
where $A_i = [0,1]$ if $i \in \sigma$, $A_i = \{0,1\}$ if $i \notin \sigma$, and $\times$ is topological product.
Thus, $2^\sigma$ is a family of $2^{n-k}$ pairwise-vertex-disjoint $(k{+}1)$-dimensional cubes in the $(n{+}1)$-cube. Let $2^K$ be the union of the $2^\sigma$ over all simplexes $\sigma \in K$ for $K$ a simplicial complex.
Then we have (see K\"{u}hnel \& Schulz \cite{ks-DIMAC}) 

\begin{equation} \mbox{{\rm (a)}}\;\;2^{\Delta_{n-1}^{\ell-1}}\,=\,\cQ_n^\ell; \;\;\;\;\mbox{{\rm (b)}}\;\;2^{\partial K}\,=\,\partial \,(2^K);\;\;\mbox{{\it and}}\;\;
\label{eq:ks1}
\end{equation}

\begin{equation}
2^K\;\mbox{{\it is a manifold}}  \iff K \;\mbox{{\it is a combinatorial sphere.}}
\label{eq:ks2}
\end{equation}

By (\ref{eq:ks1}), (a) and (b),
a factorization of the $(\ell{-}1)$-skeleton of the $(v{-}1)$-simplex by the boundary of $\Delta_{\ell}$ maps under the exponential correspondence to a factorization of the $\ell$-skeleton of the $n$-cube into subcomplexes isomorphic to $\partial(\cQ_{\ell+1})$.
We used this in \cite{hk-ADAM} to factor the 2-skeleton of a cube by the boundary of a 3-cube exactly when Steiner triples exist.  
In \cite{hk-monthly} we used (\ref{eq:ks2}) to show that, for $n$ even, the Hamiltonian factorization of the graph $K_{n+1}=\Delta_n^1$ into spanning 1-spheres yields a factorization of the 2-skeleton of $\cQ_{n+1}$ into orientable minimum-genus surfaces which are 1-Hamiltonian.

Similarly,
Hanani's factorization (\ref{eq:hanani}) of the 2-skeleton of the simplex gives
\begin{equation}
\cQ_4^3\,|\,\cQ_n^3
\iff n \geq 4, \;n\equiv 2 \;\mbox{or}\;4\;(\mbox{mod}\;6).  
\label{eq:3-cube-factor} 
\end{equation}

Again, (\ref{eq:3-cube-factor}) doesn't depend on \cite{keevash}, while
by Theorem \ref{th:gen-sx}, the exponential correspondence (\ref{eq:exp-corr}), and using \cite{keevash}, we have



\begin{theorem}
For $1 \leq \ell$,  if $n \in \cD(\ell {+} 1,\ell) \setminus \cX(\ell {+} 1,\ell)$, then $\, \cQ_{\ell+1}^{\ell}\,|\,\cQ_{n}^{\ell}$.
\label{th:gen-cube} 
\end{theorem}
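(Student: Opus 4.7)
The plan is to transport the cube factorization back from a corresponding simplex factorization using the exponential correspondence (\ref{eq:exp-corr})--(\ref{eq:ks1}). First I would reindex Theorem \ref{th:gen-sx} by the substitution $\ell \mapsto \ell - 1$, $n \mapsto n-1$; for $\ell \geq 2$ this turns the equivalence of Theorem \ref{th:gen-sx} into
\[
\Delta_{\ell}^{\ell-1}\,\big|\,\Delta_{n-1}^{\ell-1} \iff n \in \cD(\ell+1, \ell) \setminus \cX(\ell+1, \ell),
\]
so the hypothesis yields a decomposition $\Delta_{n-1}^{\ell-1} = L_1 \cup \cdots \cup L_r$ with each $L_i \cong \Delta_{\ell}^{\ell-1}$ and every $(\ell-1)$-face lying in a unique $L_i$. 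The boundary case $\ell = 1$ is handled directly: $n \in \cD(2,1)$ simply says $n$ is even, and the factorization of the $n$-point complex $\Delta_{n-1}^0$ into $n/2$ copies of $\Delta_1^0$ is immediate.

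Next I would push this factorization through the exponentiation $2^{(\cdot)}$. By (\ref{eq:ks1})(a) we have $2^{\Delta_{n-1}^{\ell-1}} = \cQ_n^\ell$, and setting $n = \ell + 1$ in the same identity gives $2^{\Delta_{\ell}^{\ell-1}} = \cQ_{\ell+1}^\ell$. Since $2^{(\cdot)}$ distributes over unions of subcomplexes, we obtain $\cQ_n^\ell = 2^{L_1} \cup \cdots \cup 2^{L_r}$ with each $2^{L_i} \cong \cQ_{\ell+1}^\ell$. The remaining unique-cover condition at the level of $\ell$-cubes is read off from (\ref{eq:exp-corr}): each $\ell$-cube of $\cQ_n^\ell$ is a product $A_1 \times \cdots \times A_n$ in which the positions with $A_i = [0,1]$ single out a unique $(\ell-1)$-simplex $\sigma$ of $\Delta_{n-1}^{\ell-1}$, and this cube lies in $2^{L_i}$ precisely when $\sigma \in L_i$, which holds for exactly one $i$.

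The only step requiring real care is this last transport: verifying that the ``exactly once'' disjointness at the $(\ell-1)$-face level of the simplex factorization becomes the ``exactly once'' condition at the $\ell$-cube level after exponentiation. That bookkeeping is forced by the explicit product description (\ref{eq:exp-corr}) and was already exploited in \cite{hk-ADAM} for $\ell = 2$; nothing deeper is needed in the general case, so the argument is mostly a clean pull-back of Theorem \ref{th:gen-sx} through the functor $2^{(\cdot)}$.
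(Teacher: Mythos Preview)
Your argument is exactly the paper's: reindex Theorem~\ref{th:gen-sx} down one dimension and transport the resulting simplex factorization through the exponential correspondence~(\ref{eq:exp-corr})--(\ref{eq:ks1}). The only slip is the assertion $2^{L_i}\cong\cQ_{\ell+1}^\ell$: since the exponentiation is taken inside $\cQ_n$ (all $n$ coordinates), each $2^{L_i}$ is actually $2^{\,n-\ell-1}$ pairwise vertex-disjoint copies of $\cQ_{\ell+1}^\ell$, one for each fixing of the coordinates outside the $(\ell{+}1)$-element vertex set of $L_i$; refining your pieces accordingly gives the claimed factorization, and your unique-cover check goes through unchanged.
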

We conjecture that the condition is also necessary. 
Note that the result for $\ell = 1$ follows from the true though not stated case of Theorem \ref{th:gen-sx} for $\ell = 0$ since $\cD(2,1)$ is the set of positive even integers and $\cX(2,1) = \emptyset$.
 
By the results of \cite{keevash}, factorability into canonical spheres is ubiquitous for skeleta of Platonic polytopes.  A {\it geometric} explanation for this phenomenon 
(in special cases)
would imply the existence theorem (in those cases).  

\end{document}